\numberwithin{equation}{section}
\def\R{\mathbb{R}}
\def\eqref#1{{\normalfont(\ref{#1})}}
\def\LRMC{\mbox{\bf  LRMC}\,\,}
\def\eqref#1{{\normalfont(\ref{#1})}}
\newtheorem{theorem}{Theorem}[section]
\newtheorem{proposition}[theorem]{Proposition}
\newtheorem{cor}[theorem]{Corollary}
\newtheorem{lemma}[theorem]{Lemma}
\newcommand{\textdef}[1]{\textit{#1}\index{#1}}
\newcommand{\PP}{{\mathcal P} }
\newcommand{\A}{{\mathcal A}}
\newcommand{\bbm}{\begin{bmatrix}}
\newcommand{\ebm}{\end{bmatrix}}
\newcommand{\bem}{\begin{pmatrix}}
\newcommand{\eem}{\end{pmatrix}}
\newcommand{\beq}{\begin{linenomath*} \begin{equation}}
\newcommand{\beqs}{\begin{linenomath*} \begin{equation*}}
\newcommand{\eeq}{\end{equation} \end{linenomath*}}
\newcommand{\eeqs}{\end{equation*} \end{linenomath*}}
\newcommand{\beqr}{\begin{linenomath*} \begin{eqnarray}}
\newcommand{\beqrs}{\begin{linenomath*} \begin{eqnarray*}}
\newcommand{\eeqr}{\end{eqnarray} \end{linenomath*}}
\newcommand{\eeqrs}{\end{eqnarray*} \end{linenomath*}}
\newcommand{\bet}{\begin{table}}
\DeclareMathOperator{\Range}{Range}
\DeclareMathOperator{\Null}{Null}
\DeclareMathOperator{\rank}{{rank}}
\newcommand{\nc}{\newcommand}
\nc{\arrow}{{\rm arrow\,}}
\nc{\Arrow}{{\rm Arrow\,}}
\nc{\BoDiag}{{\rm B^0Diag\,}}
\nc{\bodiag}{{\rm b^0diag\,}}
\nc{\Mm}{{\mathcal M}^{m} }
\nc{\Mmn}{{\mathcal M}^{mn} }
\nc{\Mpq}{{\mathcal M}^{pq} }
\nc{\Mnr}{{\mathcal M}_{nr} }
\nc{\Mnmr}{{\mathcal M}_{(n-1)r} }
\nc{\kwqqp}{Q{$^2$}P\,}
\nc{\kwqqps}{Q{$^2$}Ps}
\nc{\notinaho}{(X,S)\in \overline{AHO}(\A)}
\nc{\inaho}{(X,S)\in AHO(\A)}
\newcommand{\bea}{\begin{eqnarray}}%
\newcommand{\eea}{\end{eqnarray}}%
\newcommand{\beas}{\begin{eqnarray*}}%
\newcommand{\eeas}{\end{eqnarray*}}%
\newcommand{\Rmn}{\R^{m \times n}}%
\newcommand{\Hnp}[1][]{\,\mathbb{H}_+^{\ifthenelse{\equal{#1}{}}{n}{#1}}}
\newcommand{\Hn}[1][]{\,\mathbb{H}^{\ifthenelse{\equal{#1}{}}{n}{#1}}}
\newcommand{\Dn}[1][]{\,\mathbb{D}^{\ifthenelse{\equal{#1}{}}{n}{#1}}}
\begin{document}

\title{
	Uniqueness of Low Rank Matrix Completion and Schur Complement 
}

\author{
\href{https://www.researchgate.net/profile/Fei_Wang187}
{Fei Wang}\thanks{f49wang@uwaterloo.ca. Fields Institute, Toronto and Department of Combinatorics and Optimization, University of Waterloo, Waterloo, Canada }
}

\date{June, 2022}
          \maketitle

\begin{abstract}
In this paper we study the low rank matrix completion problem using tools from Schur complement. We  give a sufficient and necessary condition such that the completed matrix is globally unique with given data. We assume the observed entries of the matrix follow a special "staircase" structure. Under this assumption, the matrix completion problem is either globally unique or has infinitely many solutions (thus excluding local uniqueness). In fact, the uniqueness of the matrix completion problem totally depends on the rank of the submatrices at the corners of the "staircase". The proof of the theorems make extensive use of the Schur complement.   
\end{abstract}

{\bf Keywords:}
Low-rank matrix completion, matrix recovery, chordal graph

{\bf AMS subject classifications:}
65J22, 90C22, 65K10, 52A41, 90C46


\section{Introduction}

Given a matrix $Z$ with partially sampled data $z$, the matrix completion problem asks whether the missing entries from $z$ can be recovered. Clearly, there are infinitely many ways to fill the missing entries, so in order for the matrix completion problem to be meaningful, we require the rank of recovered matrix to be not greater than a given target rank $r$ with the assumption that the rank of the original matrix $Z$ is also not greater than $r$. 

The low rank matrix completion problem has come up naturally in many different applications, for example, in compressed sensing~\cite{candes2006}, image analysis~\cite{zhou2014}, model reduction~\cite{Liu2010}, machine learning~\cite{amit2007}, control~\cite{mesbahi1997} and so on. Numerous algorithms has been developed in recent years.  Most of the algorithms relax the non-convex rank constraint to a convex constraint and solve a convex optimization problem. 

 

\label{sect:backgr}

\subsection{Model on matrix completion}

\label{appsect:models}
We now introduce the framework for the matrix completion problem.
Suppose that we are given a low rank $m\times n$ real matrix
$Z\in \Rmn$ with $r = \rank(Z)$ where a subset of entries are \emph{sampled}.

The hard nonconvex \emph{low rank matrix completion} problem, \LRMC, for recovering the low rank matrix $Z$ can be reformulated as follows:
\begin{equation}
\label{prob:nonconvex}
(\LRMC) \qquad \qquad \begin{array}{cl}
	\mbox{find} & M  \\
	\text{s.t.} & \PP_{\hat{E}}(M) = z \\
	& \rank(M) \leq r
\end{array}
\end{equation}
\index{$\hat E$, indices of sampled entries of $Z$}
\index{indices of sampled entries of $Z$, $\hat E$}
where \emph{$\hat E$} is the set of
indices containing the known (\emph{sampled}) entries of $Z$,
$\PP_{\hat{E}}(\cdot): \Rmn \rightarrow \R^{|\hat E|}$ is the projection onto
the corresponding entries in $\hat{E}$, and
$z = \PP_{\hat{E}}(Z)$ is the vector
of known entries formed from $Z$.


\subsection{Unique recovery and related work}
Suppose we know the target rank and sampled entries $z$, a natural question to ask is which elements can be uniquely determined and which elements can not?  Also under which situation the whole low rank matrix $L$ admits a unique completion? It is known that if the linear mapping $ \PP_{\hat{E}}$ satisfies some restricted isometry properties (RIP)~\cite{candes2006,fazel2008}, then $Z$ is the unique matrices satisfying $\rank(L) \leq r,  \PP_{\hat{E}}(L) = z$. The RIP provides a sufficient condition for exact recovery and checking if a linear map $\mathcal{A}$ satisfies RIP property is NP-hard~\cite{bandeira2013}. However the necessary condition for a matrix to be uniquely completed is less well-studied. In~\cite{singer2010}, using tools from rigidity theory, a randomized algorithm for testing sufficient and necessary conditions for local completion and for testing sufficient conditions for global completion is proposed. In~\cite{kiraly2012,kiraly2015,jackson2016}, the authors use an algebraic combinatorial approach to study the matrix completion problem and a sufficient and necessary criterion is given in~\cite{kiraly2012}. However, the matrix to be completed is assumed to be ``generic" in~\cite{kiraly2012,kiraly2015,jackson2016}. 

We are mostly interested in exploiting the clique structures in the sampled data. The cliques in the sampled data can be used to do facial reductions for solving the corresponding optimization problems more efficiently~\cite{ma2020,huang2018}. 
In the following sections, we will provide a sufficient and necessary condition for the uniqueness of completion without using the generic assumption by exploiting the properties of the intersections of bicliques. 


\section{Matrix completion by clique intersection}
We first recall the basic property of Schur complement:
\begin{lemma}\label{lem:schur}\cite{hornzhang2005}
Consider the partitioned matrix
\[
M = \begin{bmatrix}
 A & B \\ 
 C & D
\end{bmatrix}\in \R^{m\times n},
\]
if we assume that  $\Range(B) \subseteq \Range(A)$ and $\Range(C^T) \subseteq \Range(A^T)$, then $M/A = D - C A^{\dagger} B$ is well-defined and 
\[
\begin{bmatrix} 
I & 0 \\
-CA^{\dagger} & I
\end{bmatrix} 
\begin{bmatrix}
A & B \\
C & D 
\end{bmatrix}
\begin{bmatrix}
I & -A^{\dagger} B \\
0 & I
\end{bmatrix} = 
\begin{bmatrix}
A & 0 \\
0 & M/A
\end{bmatrix}
\] and hence 
$$\rank(M) = \rank(A) + \rank(M/A).$$
\end{lemma}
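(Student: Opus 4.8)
The plan is to reduce the whole statement to two projector identities for the Moore--Penrose pseudoinverse and then multiply the blocks out. Recall that $AA^\dagger$ is the orthogonal projector onto $\Range(A)$ and $A^\dagger A$ is the orthogonal projector onto $\Range(A^T)$, both symmetric idempotents. The first step is to convert the two range hypotheses into multiplicative identities. Since every column of $B$ lies in $\Range(A)$ and $AA^\dagger$ fixes $\Range(A)$ pointwise, we get $AA^\dagger B = B$. For the second hypothesis I would pass to transposes: $\Range(C^T)\subseteq\Range(A^T)$ says each column of $C^T$ is fixed by the projector $A^\dagger A$, i.e.\ $A^\dagger A\,C^T = C^T$; transposing and using the symmetry of $A^\dagger A$ gives the right-sided identity $CA^\dagger A = C$. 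These two identities are the only consequences of the hypotheses the argument needs.

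Next I would verify the displayed factorization by multiplying out the left factor first, so that the $(2,1)$ block becomes $C - CA^\dagger A = 0$ and the $(2,2)$ block becomes $D - CA^\dagger B = M/A$:
\[
\begin{bmatrix} I & 0\\ -CA^\dagger & I\end{bmatrix}
\begin{bmatrix} A & B\\ C & D\end{bmatrix}
= \begin{bmatrix} A & B\\ 0 & M/A\end{bmatrix}.
\]
Right-multiplying by the remaining factor then annihilates the $(1,2)$ block, since $B - AA^\dagger B = 0$:
\[
\begin{bmatrix} A & B\\ 0 & M/A\end{bmatrix}
\begin{bmatrix} I & -A^\dagger B\\ 0 & I\end{bmatrix}
= \begin{bmatrix} A & 0\\ 0 & M/A\end{bmatrix}.
\]
This is exactly the identity in the statement.

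For the rank conclusion I would note that the two outer matrices are block unit-triangular, hence invertible (each is an elementary matrix of determinant $1$); multiplying $M$ on the left and right by invertible matrices preserves rank, so $\rank(M)$ equals the rank of the block-diagonal matrix $\diag(A, M/A)$, which is $\rank(A)+\rank(M/A)$. For well-definedness I would observe that, although $M/A = D - CA^\dagger B$ is written with the specific pseudoinverse, the range hypotheses make it independent of the choice of generalized inverse: writing $B = AX$, $C = YA$, any $G$ with $AGA = A$ gives $CGB = YAGAX = YAX = CA^\dagger B$, so the expression is unambiguous. The computation is essentially bookkeeping, and the only place one can slip is the transpose manipulation that must yield the \emph{right}-sided identity $CA^\dagger A = C$ rather than a left-sided one; I would also check conformability of the identity and pseudoinverse blocks so that both outer factors come out square and invertible.
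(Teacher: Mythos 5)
Your proof is correct: the two projector identities $AA^\dagger B=B$ and $CA^\dagger A=C$ are exactly what the range hypotheses give, the block multiplication and the rank count via the invertible unit-triangular factors are right, and the well-definedness remark is a nice bonus. The paper does not prove this lemma itself --- it cites it to Horn--Zhang --- and your argument is precisely the standard derivation one finds there, so there is nothing to reconcile.
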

\subsection{Two-biclique intersection}
As a consequence of Lemma~\ref{lem:schur}, we have the following theorem about the completion of $2 \times 2$ block matrix with one missing block.
\begin{theorem} \label{thm:uniq}
Consider the partitioned matrix
\[
M = \begin{bmatrix}
 A & B \\ 
 C & D
\end{bmatrix}\in \R^{m\times n},
\]
with $\rank(M) = r$, and the block submatrices $A,B,C$ are fixed (sampled).
Then $M$ is unique in LRMC if and only if $\rank(A) = r$.
\end{theorem}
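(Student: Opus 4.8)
The plan is to prove the two implications of the biconditional separately, using Lemma~\ref{lem:schur} for the direction $\rank(A)=r\Rightarrow$ uniqueness and a column-space perturbation argument for the converse. Write $A\in\R^{m_1\times n_1}$ and $D\in\R^{m_2\times n_2}$ with $m_1+m_2=m$, $n_1+n_2=n$, and set $s=\rank(A)$, $\alpha=\rank\bmat{A & B}$, $\beta=\rank\bmat{A\\ C}$; since $\bmat{A & B}$ and $\bmat{A\\ C}$ are submatrices of $M$ we always have $s\le\alpha\le r$ and $s\le\beta\le r$. Uniqueness in \LRMC here means that the unknown block $D$ is the only matrix with $\rank\bmat{A & B\\ C & D}\le r$ for the given $A,B,C$.

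For sufficiency, suppose $\rank(A)=r$. Then $r=s\le\alpha\le r$ forces $\alpha=r$, and likewise $\beta=r$. Now $\rank\bmat{A & B}=\rank(A)$ gives $\Range(B)\subseteq\Range(A)$, and $\rank\bmat{A\\ C}=\rank(A)$ gives $\Range(C^T)\subseteq\Range(A^T)$, so the hypotheses of Lemma~\ref{lem:schur} hold. Consequently, for \emph{any} candidate block $D'$ the Schur complement $D'-CA^{\dagger}B$ is well defined and $\rank\bmat{A & B\\ C & D'}=\rank(A)+\rank(D'-CA^{\dagger}B)=r+\rank(D'-CA^{\dagger}B)$. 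Because the two range conditions depend only on the fixed blocks $A,B,C$, this identity is valid for every $D'$; hence the rank constraint $\le r$ forces $D'=CA^{\dagger}B$, and the completion is unique.

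For necessity I would argue the contrapositive: if $s<r$, the completion is not unique. The key intermediate claim is that $s<r$ implies $\alpha<r$ or $\beta<r$. To prove it, suppose $\alpha=r$. Then the top block rows $\bmat{A & B}$ span the full row space of $M$, so $\Range\!\big(\bmat{A & B}^T\big)=\Range(M^T)$. Let $p:\R^{n}\to\R^{n_1}$ be the projection onto the first $n_1$ coordinates; applying $p$ to these equal row spaces gives $\Range(A^T)=p(\Range(M^T))=\Range\!\big(\bmat{A\\ C}^T\big)$, so $\beta=s<r$. (Alternatively, one may invoke the Frobenius-type bound $\rank(M)\ge\alpha+\beta-s$, which together with $\alpha=\beta=r$ forces $s\ge r$.) Thus at least one border block drops rank.

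Finally, assuming without loss of generality $\alpha<r$ (the case $\beta<r$ being symmetric under transposition, which swaps $B$ and $C$ and perturbs $D^T$), let $U=\Range(M)$, so $\dim U=r$, and let $\pi$ project $\R^{m}$ onto its first $m_1$ coordinates. Since $\pi(U)=\Range\bmat{A & B}$ has dimension $\alpha<r$, the subspace $U_0=\{u\in U:\pi(u)=0\}$ has dimension $r-\alpha>0$; choose a nonzero $\bmat{0\\ v}\in U_0$. Replacing the unknown block by $D'=D+v\,e_1^{T}$ adds the vector $\bmat{0\\ v}\in U$ to a single column of $M$ and leaves the rest unchanged, so every column of the new matrix still lies in $U$ and its rank is at most $r$; since $D'\ne D$ (and rescaling $v$ yields infinitely many such completions) the completion is not unique. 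I expect the main obstacle to be precisely this necessity direction — specifically showing that a rank deficiency in $A$ must propagate to a rank deficiency in one of the border blocks $\bmat{A & B}$ or $\bmat{A\\ C}$, which is exactly what makes the rank-preserving column-space perturbation available.
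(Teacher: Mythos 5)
Your proof is correct, and while the sufficiency half matches the paper's Schur-complement argument (you are in fact more careful: you verify the range inclusions required by Lemma~\ref{lem:schur} and apply the rank identity to every candidate block $D'$, not just the given one), the necessity half takes a genuinely different route. The paper splits on whether $\Range(B)\subseteq\Range(A)$ and $\Range(C^T)\subseteq\Range(A^T)$: if both hold it perturbs the nonzero-rank generalized Schur complement $D-CA^{\dagger}B$ by an arbitrary matrix of the appropriate rank, and if one fails it picks $x$ with $Ax=0$, $Cx\neq 0$ and adds $\bmat{0\\ Cx}$ to a column of $\bmat{B\\ D}$, which changes only $D$. You instead isolate the clean intermediate claim that $\rank(A)<r$ forces $\rank\bmat{A & B}<r$ or $\rank\bmat{A\\ C}<r$ --- proved by projecting the coinciding row spaces, or equivalently by the inequality $\rank\bmat{A&B}+\rank\bmat{A\\C}\le\rank(A)+\rank(M)$ --- and then use a single uniform perturbation: a nonzero vector of $\Range(M)$ supported on the unknown block, added to one column (or, after transposing, one row) of $D$. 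Your version buys a unified construction and makes explicit the structural fact that a rank deficiency in the overlap must propagate to one of the border blocks, which the paper leaves implicit; the paper's version buys, in the case where the Schur complement is well defined, an explicit parametrization of all completions ($D=CA^{\dagger}B+E$ with $\rank(E)\le r-\rank(A)$), which is more informative than a single alternative completion. Both arguments are sound.
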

\begin{proof}
First assume $\rank(A) = r$. Let $M/A = D-C A^{\dagger} B$ be the
generalized Schur complement, e.g.,~\cite{ouel:81}, 
where $A^\dagger$ denotes the
Moore-Penrose generalized inverse. Then we 
have $\rank(M/A) + \rank(A) =\rank(M)$. Therefore $\rank(M/A) = 0$ and $D = CA^{\dagger}B$ is unique.
\index{$A^\dagger$, Moore-Penrose generalized inverse}
\index{Moore-Penrose generalized inverse, $A^\dagger$}
\index{$M/A$, generalized Schur complement}
\index{generalized Schur complement, $M/A$}

For necessity, assume $\rank(A) <r$, we first assume $\Range(B) \subseteq \Range(A)$ and $\Range(C^T) \subseteq \Range(A^T)$. Then we by Lemma \ref{lem:schur} we have the following equality 
\[
\rank(D-CA^{\dagger}B) + \rank(A) = \rank(M)
\]
Since $\rank(A) < r$ and $\rank(M) = r$, we have $\rank(D - C A^{\dagger}B) = \rank(M) -\rank(A) = \bar r >0$. We can then let $D = C A^{\dagger}B + E$ where $E$ is an arbitrary matrix of rank $\bar r$. Therefore $D$ is not unique.

Now suppose either $\Range(B) \subsetneq \Range(A)$ or $\Range(C^T) \subsetneq \Range(A^T)$. Without loss we can assume $\Range(C^T) \subsetneq \Range(A^T)$. Then we have $\Null(A) \subsetneq \Null(C)$ which means there exists a column vector $x$ such that $Ax =0, Cx \neq 0$. Now we can add the column vector 
$\begin{bmatrix} A x \cr C x \end{bmatrix} = \begin{bmatrix} 0 \cr C x \end{bmatrix} $ to any column of $\begin{pmatrix} B\cr D
\end{pmatrix}$ without changing the rank of $M$ and we get a different $D$. So $D$ is not unique.
\end{proof}
We now study the simple case such that the observed data pattern $\hat{E}$ consists of two bicliques . 
\begin{cor}\label{cor:2cliq}
Let $Z$ be the following matrix with two intersecting
bicliques  such that $\rank(Z)=r$ and  submatrices $X$ and $Y$ are fixed, 
\begin{equation}
Z=\left[\begin{array}{ccc}
 \multicolumn{1}{c|}{Z_1} &X_1 & X_2 \\ \cline{1-2}
\multicolumn{1}{c|}{Y_1}  & \multicolumn{1}{c|}{Q}  &X_3  \\ \cline{2-3}
Y_2 & \multicolumn{1}{c|}{Y_3}  & Z_2
\end{array}\right],
 \quad X=\left[\begin{array}{cc}
 X_1 & X_2 \\ \cline{1-1}
 \multicolumn{1}{c|}{Q}  &X_3  
\end{array}\right],
\quad Y =\left[\begin{array}{cc}
\multicolumn{1}{c|}{Y_1}  & Q    \\ \cline{2-2}
Y_2 & Y_3  
\end{array}\right].
\end{equation}
Let $Q$  be the submatrix that lies in both $X$ and $Y$.  
If $\rank(Q) = r$, then $Z$ is unique in LRMC.
\end{cor}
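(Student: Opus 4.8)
The plan is to reduce the corollary to two applications of Theorem~\ref{thm:uniq}, one for each corner block of the staircase. Observe first that the only blocks of $Z$ that are \emph{not} sampled are the top-left block $Z_1$ (block position $(1,1)$) and the bottom-right block $Z_2$ (block position $(3,3)$); every other block belongs to $X$ or to $Y$ and is therefore fixed. Consequently it suffices to show that, for an arbitrary feasible completion $M$ (one with $\rank(M)\le r$ agreeing with the sampled data), the two unknown corner blocks of $M$ are forced to equal $Z_1$ and $Z_2$ respectively.

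First I would treat $Z_1$. Consider the $2\times 2$ block submatrix of $M$ sitting in block-rows $1,2$ and block-columns $1,2$,
\[
\begin{bmatrix} M_1 & X_1 \\ Y_1 & Q \end{bmatrix},
\]
where $M_1$ denotes the $(1,1)$ block of $M$. Being a submatrix of $M$, it has rank at most $\rank(M)\le r$, while containing $Q$ forces its rank to be at least $\rank(Q)=r$; hence its rank is exactly $r$ (and in particular $\rank(M)=r$). After the permutation that swaps the two block-rows and the two block-columns this submatrix becomes $\begin{bmatrix} Q & Y_1 \\ X_1 & M_1 \end{bmatrix}$, which is exactly the pattern of Theorem~\ref{thm:uniq} with fixed blocks $A=Q$, $B=Y_1$, $C=X_1$ and unknown block $D=M_1$. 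Since $\rank(A)=\rank(Q)=r$ equals the rank of the submatrix, Theorem~\ref{thm:uniq} applies and pins $M_1$ down uniquely, namely $M_1 = X_1 Q^{\dagger} Y_1$.

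The block $Z_2$ is handled symmetrically using the $2\times 2$ block submatrix in block-rows $2,3$ and block-columns $2,3$, that is $\begin{bmatrix} Q & X_3 \\ Y_3 & M_2 \end{bmatrix}$, where $M_2$ is the $(3,3)$ block of $M$; here $Q$ already occupies the top-left position, so Theorem~\ref{thm:uniq} (with $A=Q$, $B=X_3$, $C=Y_3$, $D=M_2$) applies directly and gives $M_2 = Y_3 Q^{\dagger} X_3$. Because these two formulas coincide with the values forced for the true matrix $Z$, every feasible completion $M$ satisfies $M_1=Z_1$ and $M_2=Z_2$, so $M=Z$ and the completion is globally unique.

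The argument is short once Theorem~\ref{thm:uniq} is available, and the only point that needs care is the rank bookkeeping: one must verify that for \emph{every} feasible completion the relevant corner submatrix has rank exactly $r$, which is precisely where the hypothesis $\rank(Q)=r$ is essential, squeezing the submatrix rank between $r$ and $\rank(M)\le r$. One should also note that the two corner completions genuinely decouple, since $M_1$ and $M_2$ live in disjoint $(1,1)$ and $(3,3)$ blocks, so determining each independently reconstructs all of $Z$. Finally, only sufficiency is asserted here, so no converse direction is required.
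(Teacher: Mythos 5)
Your proof is correct and follows essentially the same route as the paper: identify the top-left (resp.\ bottom-right) $2\times 2$ block submatrix containing $Q$, note that its rank is squeezed to exactly $r$ because it contains $Q$ and sits inside a rank-$\le r$ matrix, and apply Theorem~\ref{thm:uniq} with $A=Q$ to pin down each unknown corner. The only difference is that you spell out the block permutation and the explicit formulas $M_1 = X_1 Q^{\dagger} Y_1$ and $M_2 = Y_3 Q^{\dagger} X_3$, which the paper leaves implicit.
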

\begin{proof}
If $Q$ has rank $r$, then the top left union of four blocks must also be
rank $r$. Therefore, Theorem \ref{thm:uniq} implies $Z_1$ is unique.
Similarly, $Z_2$ is unique by looking at the four blocks at the bottom
right.
\end{proof}
However, the converse may not be true. Consider the following example 
\begin{equation}
Z=\left[\begin{array}{cccc}
 \multicolumn{1}{c|}{Z_1} & 6 & 5& 3 \\ \cline{1-3}
\multicolumn{1}{c|}{1}  & 2 &\multicolumn{1}{c|}{3}  &2  \\ \cline{2-4}
3 & 4 &\multicolumn{1}{c|}{2}  & Z_2
\end{array}\right],
 \quad Q=\left[\begin{array}{cc}
 2 & 3  
\end{array}\right],
\end{equation}
Assume $\rank(Z) = 2$ and $\rank(Q) = 1 < \rank(Z)$. However, $Z_1$ and $Z_2$ are still unique and by basic linear algbera we have $Z_1 = 4$ and $Z_2 = 1$.

From Theorem \ref{thm:uniq}, if we have two bicliques such that their 
intersection has the target rank, we can now merge these two bicliques 
into one bigger biclique and recover the corresponding missing entries of $Z$.
We can then use this bigger biclique to merge with other bicliques. This
process can carry on until all the missing entries are recovered.

\subsection{More bicliques and the staircase }
We can generate Theorem~\ref{thm:uniq} to include three  bicliques.
\begin{theorem} \label{thm:uniq2}
Consider the partitioned matrix
\[
M = \begin{bmatrix}
E & F \\ 
A & B \\ 
 C & D
\end{bmatrix}\in \R^{m\times n},
\]
with $\rank(M) = r$, where $A,B,C,F$ are fixed (sampled).
Then $M$ is unique in LRMC if and only if $\rank(A) = r$ and $\rank(B) = r$.
\end{theorem}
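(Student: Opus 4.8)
The plan is to reduce everything to the two overlapping $2\times 2$ block submatrices, the bottom block $\begin{bmatrix} A & B\\ C & D\end{bmatrix}$ (rows $2,3$) and the top block $\begin{bmatrix} E & F\\ A & B\end{bmatrix}$ (rows $1,2$), and apply Theorem~\ref{thm:uniq} to each. For sufficiency, suppose $\rank(A)=r$ and $\rank(B)=r$, and let $M'$ be any completion with $\rank(M')\le r$. Since $A$ is a submatrix of the bottom block $N'=\begin{bmatrix} A & B\\ C & D'\end{bmatrix}$, we have $r=\rank(A)\le\rank(N')\le\rank(M')\le r$, so $\rank(N')=r=\rank(A)$; Theorem~\ref{thm:uniq} then forces $D'=CA^{\dagger}B$. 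Symmetrically, $B$ sits inside the top block $\begin{bmatrix} E' & F\\ A & B\end{bmatrix}$, whose rank is likewise pinned to $r$, and the $180^{\circ}$-rotated form of Theorem~\ref{thm:uniq} (pivoting on the full-rank corner $B$, so that $E'$ plays the role of the unknown bottom-right block) forces $E'=FB^{\dagger}A$. Hence $E$ and $D$ are determined and $M$ is unique.

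For necessity I would argue the contrapositive: if $\rank(A)<r$ or $\rank(B)<r$, then $M$ is not unique. The clean case is when a range containment fails. If $\Range(B)\not\subseteq\Range(A)$, choose $y$ with $y^{T}A=0$ but $y^{T}B\neq 0$; then $y^{T}[\,A\ B\,]=[\,0\ \ y^{T}B\,]$ lies in the row space of the fixed middle block, hence in the row space of $M$. Adding this vector to any row of $[\,C\ D\,]$ leaves $A,B,C,F$ and $E$ untouched, changes $D$ by the nonzero $y^{T}B$, and cannot raise the rank, producing a second completion. The mirror construction, using $z$ with $z^{T}B=0$ and $z^{T}A\neq 0$ added to a top row, moves $E$ whenever $\Range(A)\not\subseteq\Range(B)$. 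Whenever $\rank[\,A\ B\,]=r$, the inequality $\rank(A)<r$ forces $\Range(B)\not\subseteq\Range(A)$ (since otherwise $\Range([\,A\ B\,])=\Range(A)$ would have dimension $<r$), and likewise $\rank(B)<r$ forces $\Range(A)\not\subseteq\Range(B)$, so these subcases are settled immediately.

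The remaining, genuinely coupled case is $\Range(A)=\Range(B)$ with common rank $s=\rank[\,A\ B\,]<r$: here neither single-block perturbation is available, and the extra rank of $M$ must be supplied by the top or bottom block rows. This is the step I expect to be the main obstacle, since the non-uniqueness can no longer be exhibited by moving $E$ or $D$ alone — as the example following Corollary~\ref{cor:2cliq} illustrates, one must deform $E$ and $D$ \emph{simultaneously} while staying on the rank-$r$ variety. I would handle it by producing a one-parameter family: pick $v=(v_1,v_2)$ with $Av_1+Bv_2=0$ but $v\notin\Null(M)$ (possible because $\dim\Null([\,A\ B\,])=n-s>n-r=\dim\Null(M)$), and use $v$ to drive a rank-preserving column/row deformation in which a change in $D$ is compensated by a change in $E$. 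The crux is checking that this joint deformation keeps $\rank(M)\le r$ while fixing the sampled blocks $A,B,C,F$; once any nonzero admissible direction is found it yields a distinct completion, so $M$ is not unique, and combining the three cases gives the claimed equivalence.
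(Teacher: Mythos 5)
Your sufficiency argument and your treatment of the subcases $\Range(B)\not\subseteq\Range(A)$ and $\Range(A)\not\subseteq\Range(B)$ are correct, and the row-perturbation device (adding $y^T[\,A\ \ B\,]=[\,0\ \ y^TB\,]$ to a row of $[\,C\ \ D\,]$) is a clean, more elementary route than the paper's for those subcases. But the case you yourself flag as the main obstacle --- $\Range(A)=\Range(B)$ with $s=\rank[\,A\ \ B\,]<r$ --- is a genuine gap, and the mechanism you sketch for it does not work. If $v\in\Null([\,A\ \ B\,])\setminus\Null(M)$, then $Mv$ vanishes on the middle row block but in general has nonzero components in \emph{both} the $[\,E\ \ F\,]$ rows and the $[\,C\ \ D\,]$ rows; adding $Mv$ to a column of the first column block then perturbs the fixed block $C$, and adding it to a column of the second block perturbs the fixed block $F$. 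A concrete obstruction: take $r=2$, $A=B=\left[\begin{smallmatrix}1&0\\0&0\end{smallmatrix}\right]$ and $C=D=E=F=I_2$. Every $v$ with $Av_1+Bv_2=0$ and $Mv\neq0$ has nonzero image in both outer row blocks, so no additive rank-one update $\alpha(Mv)e_j^T$ preserves the sampled data; moreover the set of valid completions here satisfies $e_{22}d_{22}=1$, i.e.\ $E$ and $D$ are coupled \emph{multiplicatively}, not along a linear direction, so no first-order/one-parameter additive deformation of the kind you describe can be verified to stay on the rank-$r$ set without more work.

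The paper closes exactly this case with a Schur-complement construction that you would need to supply: after permuting $[\,A\ \ B\,]$ to the top, it writes $A=[\,A_1\ \ A_2\,]$ with $A_1$ of full column rank $\bar r=\rank(A)$, forms the Schur complements $M_1=E_2-E_1A_1^{\dagger}A_2$, $M_2=F-E_1A_1^{\dagger}B$, $M_3=C_2-C_1A_1^{\dagger}A_2$, $M_4=D-C_1A_1^{\dagger}B$, and uses the rank identities $\rank(M)=\bar r+\rank\left[\begin{smallmatrix}M_1&M_2\\ M_3&M_4\end{smallmatrix}\right]$ and $\rank\left[\begin{smallmatrix}A\\ E\end{smallmatrix}\right]=\bar r+\rank(M_1)$ to conclude $\rank(M_1)=r-\bar r>0$ and $M_4=M_3M_1^{\dagger}M_2$. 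The new completion is then obtained by the \emph{scaling} $E_2\mapsto E_2+M_1$ (so $M_1\mapsto 2M_1$) and $D\mapsto D-\tfrac12 M_4$, which preserves rank because $\tfrac12M_4=M_3(2M_1)^{\dagger}M_2$; this is precisely the multiplicatively coupled joint deformation of $E$ and $D$ that your sketch anticipates but does not construct. Until you produce such a deformation (or an equivalent one) and verify the range conditions needed for the rank formula, the necessity direction is not proved.
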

\begin{proof}
Suppose $\rank(A) = \rank(B) = r$, then it is obvious that $D,E$ are unique by Theorem \ref{thm:uniq}.

For necessity, without loss we assume $\rank(A) = \bar r < r$. By a permutation, let
\[
M = \begin{bmatrix}
A & B \\ 
E & F \\ 
C & D
\end{bmatrix}.
\]

If $ \rank(\begin{bmatrix}
A  \\ 
E 
\end{bmatrix}) < r$, then by Therem \ref{thm:uniq}, $D$ is not unique so $M$ is not unique.
 
If $\rank(\begin{bmatrix}
A  \\ 
E  
\end{bmatrix}) = r$, then we have $\Range( B )\subseteq \Range(A )$. Now we partition $A,E,C$ such that 
\[
M = \begin{bmatrix}
A_1 & A_2 & B \\ 
E_1 & E_2 & F \\ 
C_1 & C_2 & D
\end{bmatrix}
\]
where $A_1$ has full column rank $\bar r$. So we have $\Range(A_1) = \Range(A)$.

Let $M_1, M_2, M_3, M_4$ be the four Schur complements corresponding to $E_2, F, C_2, D$ such that $M_1 = E_2 - E_1 A_1^{\dagger} A_2$, $M_2 = F - E_1 A_1^{\dagger} B$,
$M_3 = C_2 - C_1 A_1^{\dagger} A_2$, $M_4 = D - C_1 A_1^{\dagger} B$. Since $\Range(A_2) \subseteq \Range(A_1)$, $\Range(B) \subseteq \Range(A_1)$ and $\Range(E_1^T) \subseteq \Range(A_1^T)$, $\Range(C_1^T) \subseteq \Range(A_1^T)$, we have 
\[
\rank(M) = \rank(A_1) + \rank(\begin{bmatrix} M_1 & M_2 \cr M_3 & M_4\end{bmatrix}) = r.
\]
Also 
\[
\rank(\begin{bmatrix} A \cr E\end{bmatrix}) = \rank(A_1) + \rank(M_1) = r.
\]  
Therefore $\rank(M_1) =  \rank(\begin{bmatrix} M_1 & M_2 \cr M_3 & M_4\end{bmatrix}) = r - \bar r$ and we have 
\begin{equation}\label{eq:m14}
M_4 = M_3 M_1^{\dagger}M_2.
\end{equation}
Now $M_1 \neq 0$, since $\rank(M_1) = r - \bar r >0$, we can perturb $E_2$ such that 
$\bar E_2 = E_2 + M_1$ and perturb $D$ such that $\bar D = D - \frac12 M_4$ and the corresponding full perturbed matrix is $\bar M$. After similar arguments we can get 
\begin{eqnarray*}
\rank(\bar M) & = & \rank(A_1) + \rank(\begin{bmatrix} 2 M_1 & M_2 \cr M_3 & \frac12 M_4\end{bmatrix}) \\
& = & \rank(A_1) + \rank(2 M_1) + \rank(\frac12 M_4 - M_3(2M_1)^{\dagger}M_2) \\
& = & \rank(A_1) + \rank(2 M_1)  \quad \text{ (due to \eqref{eq:m14})} \\
& = & \bar r + r - \bar r = r.
\end{eqnarray*}
Therefore $M$ is not unique.
\end{proof}
 The following proposition regarding the general case of 3 bicliques is a direct consequence of Theorem~\ref{thm:uniq2}.     
\begin{proposition} \label{prop:uniq3}
Consider the partitioned matrix
\[
M = \begin{bmatrix}
F & H  & E \\ 
A & G  & B \\ 
C & K  & D 
\end{bmatrix}\in \R^{m\times n},
\]
with $\rank(M) = r$, where $A,B,C,E,G$ are fixed (sampled).
Then $M$ is unique in LRMC if and only if $\rank(A) = r$ and $\rank(B) = r$.
\end{proposition}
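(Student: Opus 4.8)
The plan is to derive the proposition from Theorem~\ref{thm:uniq2}, together with its rank-collapse special case Theorem~\ref{thm:uniq}, by treating the fully sampled middle block row $\begin{bmatrix} A & G & B\end{bmatrix}$ as a hinge and grouping the three block columns into two super-columns. The only way this $3\times 3$ layout differs from the template of Theorem~\ref{thm:uniq2} is the extra middle block column, whose entries $H$ and $K$ are unknown; the heart of the argument is to show that these two extra unknowns do not enter the uniqueness criterion.

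For sufficiency, suppose $\rank(A)=\rank(B)=r$. To recover the bottom unknowns, look at the last two block rows,
\[
N=\begin{bmatrix} A & G & B\\ C & K & D\end{bmatrix},
\]
read with block columns $\{1\}$ and $\{2,3\}$, so that $A$ is the sampled top-left block and $\begin{bmatrix} K & D\end{bmatrix}$ is the only unknown block. Since $A$ is a submatrix of $N$ and $N$ is a submatrix of $M$, we have $r=\rank(A)\le\rank(N)\le\rank(M)=r$, hence $\rank(N)=r$; Theorem~\ref{thm:uniq} then makes $\begin{bmatrix} K & D\end{bmatrix}$ the unique rank-$r$ completion of $N$. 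Because every rank-$\le r$ completion of $M$ restricts to a rank-$\le r$ completion of $N$, the blocks $K$ and $D$ are uniquely determined. Symmetrically, reading the first two block rows with block columns $\{1,2\}$ and $\{3\}$ and reversing the block rows and columns puts the unknown $\begin{bmatrix} F & H\end{bmatrix}$ in the bottom-right corner opposite $B$; as $\rank(B)=r$ again pins the rank of that band at $r$, Theorem~\ref{thm:uniq} determines $F$ and $H$. All four unknown blocks being fixed, $M$ is unique.

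For necessity I prove the contrapositive, and by the symmetry of the block pattern under reversing the orders of the rows and columns (which interchanges $A$ and $B$) it suffices to treat the case $\rank(A)<r$. The device is to freeze the middle-column unknown $H$ at its true value: this merely adds a constraint, so it is enough to exhibit non-uniqueness for the resulting, more constrained, problem. With $H$ frozen and the block columns grouped as $\{1\}$ and $\{2,3\}$, the matrix $M$ becomes a bona fide instance of Theorem~\ref{thm:uniq2}, in which $A$, $\begin{bmatrix} G & B\end{bmatrix}$, $C$ and $\begin{bmatrix} H & E\end{bmatrix}$ are the sampled blocks and the corner blocks $F$ and $\begin{bmatrix} K & D\end{bmatrix}$ are the unknowns, with $\rank(M)=r$ unchanged. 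Since the hypothesis $\rank(A)=r$ of Theorem~\ref{thm:uniq2} fails, that theorem produces a second completion differing in $F$ or in $\begin{bmatrix} K & D\end{bmatrix}$; read back in the original problem (where $H$ was only held at its true value) this is a genuine second solution, so $M$ is not unique.

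The main obstacle is the bookkeeping forced by the extra middle column, and it reduces to two monotonicity checks that must be made explicit: passing to a submatrix cannot raise the rank (used to clamp $\rank(N)=r$ in sufficiency), and imposing an extra equality constraint, namely freezing $H$ or, symmetrically, $K$, cannot convert a non-unique problem into a unique one (used in necessity). A secondary point worth flagging is that the grouped instance in the necessity step carries the criterion $\rank(A)=r$ and $\rank\begin{bmatrix} G & B\end{bmatrix}=r$ rather than $\rank(B)=r$; this is harmless, because only the ``$\rank(A)<r\Rightarrow$ non-unique'' half of Theorem~\ref{thm:uniq2} is invoked, and the $\rank(A)$ condition is exactly the one being violated.
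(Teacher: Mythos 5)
Your proof is correct and takes essentially the same route as the paper: group the middle and right block columns so the problem collapses to the two-column templates, and invoke Theorem~\ref{thm:uniq2} for necessity (the paper's entire proof is that one-line reduction, ``let $B=[G\;B]$, $E=[H\;E]$''). Your write-up is in fact more careful than the paper's, since you spell out the sufficiency half via Theorem~\ref{thm:uniq} on the two block-row bands and supply the freeze-$H$ justification that the paper silently needs because the grouped block $[H\;E]$ is only partially sampled.
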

\begin{proof}
Without loss we assume $\rank(A)< r$. Now let $B = [G,\;B]$, $E = [H,\;E]$, the result follows directly from Theorem~\ref{thm:uniq2}.
%
\end{proof}

We can then generate Theorem~\ref{thm:uniq2} to the cases with 4 and 5 bicliques in the following Theorem and propositions.
\begin{theorem} \label{thm:uniq4}
Consider the partitioned matrix
\[
M = \begin{bmatrix}
F & H  & E \\ 
A & G  & B \\ 
C & K  & D 
\end{bmatrix}\in \R^{m\times n},
\]
with $\rank(M) = r$, where $F,A,G,K,D$ are fixed.
Then $M$ is unique in LRMC if and only if $\rank(A) = \rank(G) = \rank(K) = r$.
\end{theorem}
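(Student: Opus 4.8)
The sampled blocks $F,A,G,K,D$ sit on a descending staircase, and the three blocks $A,G,K$ are exactly the pairwise overlaps of the four maximal sampled bicliques (the column block $\bmat{F\\A}$, the row block $\bmat{A & G}$, the column block $\bmat{G\\K}$, and the row block $\bmat{K & D}$). Writing the block rows as $R_1,R_2,R_3$ and the block columns as $C_1,C_2,C_3$, the statement asserts that $M$ is forced precisely when each consecutive overlap $A,G,K$ attains the target rank $r$. For sufficiency I would recover the unknowns $H,C,B,E$ in turn by applying Theorem~\ref{thm:uniq} to $2\times2$ block windows, each reordered so that its top-left corner is one of $A,G,K$: $H$ from $\bmat{A & G\\ F & H}$ (corner $A$), then $C$ from $\bmat{G & A\\ K & C}$ (corner $G$), then $B$ from $\bmat{K & D\\ G & B}$ (corner $K$), and finally $E$ from $\bmat{G & B\\ H & E}$ (corner $G$, once $H,B$ are known). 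Each window contains a rank-$r$ block and lies inside the rank-$r$ matrix $M$, so it has rank $r$ and Theorem~\ref{thm:uniq} pins its missing corner to the generalized Schur value.

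For necessity I would prove the contrapositive and split on the central overlap $G$. If $\rank(G)=r$, then some corner, say $A$, is deficient, and the full block row $\bmat{A & G}$ already has rank $r$; hence the row space of the two-column window $\bmat{F & H\\ A & G\\ C & K}$ is the fixed space $U=\Range\!\left(\bmat{A & G}^{T}\right)$, and $\rank(A)<r$ means the projection of $U$ onto the $C_1$-coordinates is not injective. A nonzero element of $U$ supported on the $C_2$-coordinates can then be added to every row of $H$, producing a second value $\bar H$ with the window still of rank $r$ and $F,A,G,K$ untouched; this extends to a genuine second completion of $M$ by keeping $B,C$ fixed and recomputing $E$ from the unchanged row space, so that the fixed block $D$ in the bottom row is never disturbed. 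The case $\rank(K)<r$ is the anti-transpose of this one (which swaps $A\leftrightarrow K$ and $H\leftrightarrow B$ while fixing $G$), perturbing $B$ rather than $H$.

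The remaining, and genuinely hard, case is $\rank(G)<r$, where no neighbouring block is guaranteed to have rank $r$ and the clean row-space trick is unavailable. Here I would mirror the explicit Schur-complement perturbation from the proof of Theorem~\ref{thm:uniq2}: choose a maximal full-column-rank subblock $G_1$ of $G$, partition the surrounding blocks accordingly, and form the four generalized Schur complements relative to $G_1$; the rank identity of Lemma~\ref{lem:schur} forces one of them to equal the Schur product of the others, and perturbing two of the free blocks by the matching Schur-complement increments, exactly as $E_2$ and $D$ are shifted in the proof of Theorem~\ref{thm:uniq2}, keeps the total rank at $r$ while changing an unknown block.

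The main obstacle is precisely this last case. Unlike in Theorem~\ref{thm:uniq2}, the staircase carries two fixed anchors $F$ and $D$ on opposite corners, so the perturbation cannot be routed through $C$ (whose row also contains the fixed $D$) and must instead be carried by a block whose block row and block column each still contain a free block, naturally $E$. Verifying that such a routing always exists when $\rank(G)<r$, and separately disposing of the degenerate situation where the two-column window already has rank below $r$ (in which case the omitted column $\bmat{E\\ B\\ D}$ supplies the missing rank through its free blocks $E,B$ and non-uniqueness is immediate), is where I expect the bulk of the work to lie.
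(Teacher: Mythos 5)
Your sufficiency argument (four ordered applications of Theorem~\ref{thm:uniq} to $2\times2$ windows anchored at $A$, $G$, $K$, and then $G$ again once $H,B$ are known) is correct and supplies a step the paper leaves implicit, and your row-space perturbation for the case $\rank(G)=r$ with a deficient $A$ or $K$ is a valid, if stylistically different, substitute for the paper's reduction to Theorem~\ref{thm:uniq2}. The problem is the case you yourself flag as open, $\rank(G)<r$: what you offer there is a plan, not a proof, and the plan as stated does not transfer from Theorem~\ref{thm:uniq2}. There, the column split of the fixed corner $A=[A_1\;\;A_2]$ produces a perturbable sub-block $E_2$ of the \emph{free} block $E$ sitting directly above $A_2$; here the analogous split of $G$ cuts block column $2$, whose other occupants are $H$ (free) and $K$ (\emph{fixed}), and the bottom block row also carries the fixed $D$, so the same surgery is obstructed on both sides. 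You correctly sense that the perturbation must be carried by $E$ and absorbed elsewhere, but ``verifying that such a routing always exists'' is precisely the content of the necessity direction in this case, and leaving it unverified leaves the theorem unproved. Your side remark that non-uniqueness is ``immediate'' when the two-column window has deficient rank is also unjustified as stated; the paper needs a separate appeal to Theorem~\ref{thm:uniq2} for the corresponding degenerate sub-cases.

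For comparison, the paper closes the $\rank(G)<r$ case without any sub-block partition of $G$. It first disposes of the sub-cases $\rank\bigl(\bigl[\begin{smallmatrix}H\\G\end{smallmatrix}\bigr]\bigr)<r$ and $\rank([G\;\;B])<r$ via Theorem~\ref{thm:uniq2} (note these are conditions on the \emph{true values} of the free blocks $H$ and $B$, not on the sampled data). In the remaining sub-case both of these ranks equal $r$, so after permuting $M$ to $\bigl[\begin{smallmatrix}G&B&A\\H&E&F\\K&D&C\end{smallmatrix}\bigr]$ the leading block $P=\bigl[\begin{smallmatrix}G&B\\H&E\end{smallmatrix}\bigr]$ has rank $r$ and contains exactly one fixed block, $G$, of deficient rank; Theorem~\ref{thm:uniq} applied to $P$ (with $H,B$ frozen at their true values) produces $\bar E\neq E$ with $\rank(\bar P)=r$, and since $\Range(\bar P)=\Range\bigl(\bigl[\begin{smallmatrix}G\\H\end{smallmatrix}\bigr]\bigr)\supseteq\Range\bigl(\bigl[\begin{smallmatrix}A\\F\end{smallmatrix}\bigr]\bigr)$ and the analogous inclusion holds for row spaces, the free block $C$ can be reset to $[K\;\;D]\,\bar P^{\dagger}\bigl[\begin{smallmatrix}A\\F\end{smallmatrix}\bigr]$ so that the Schur complement of $\bar P$ vanishes and $\rank(\bar M)=r$ by Lemma~\ref{lem:schur}. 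So the perturbation is indeed carried by $E$ and patched through $C$, exactly as you guessed, but the mechanism is a direct application of Theorem~\ref{thm:uniq} to $P$ plus a Schur-complement correction, not a replay of the four-complement surgery from Theorem~\ref{thm:uniq2}; this is the step your proposal is missing.
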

\begin{proof}
If $\rank(A)< r$ or $\rank(K) <r$, then according to Theorem \ref{thm:uniq2}, $M$ is not unique. Therefore we only need to consider the case when $\rank(G) < r$.

If $\rank(\begin{bmatrix} H \\ G\end{bmatrix}) < r$ or $\rank(\begin{bmatrix} G & B\end{bmatrix}) < r$, then again according to Theorem \ref{thm:uniq2}, $M$ is not unique. Therefore we consider the case where 
$\rank(\begin{bmatrix} H \\ G\end{bmatrix}) = r$ and $\rank(\begin{bmatrix} G & B\end{bmatrix}) = r$.

By a permutation, let 
\[
M = \begin{bmatrix}
G & B  & A \\ 
H & E  & F \\ 
K & D  & C 
\end{bmatrix}\in \R^{m\times n}.
\]
Let $P = \begin{bmatrix} G & B \\ H & E \end{bmatrix}$, since $\rank(G) < r$, by Theorem \ref{thm:uniq}, there exists a different $\bar{E}$  and $\bar{P}$ such that $\rank(\bar{P}) = r$, we let $\bar{C} = \begin{bmatrix} K & D \end{bmatrix}\bar{P}^{\dagger} \begin{bmatrix} A  \\ F \end{bmatrix})$,  since  $\Range(\begin{bmatrix}A \\ F \end{bmatrix}) \subseteq  \Range(\begin{bmatrix}G \\ H \end{bmatrix})$ and  $\Range(\begin{bmatrix}K^T \\ D^T \end{bmatrix}) \subseteq  \Range(\begin{bmatrix}G^T \\ B^T \end{bmatrix})$, we have $\rank(\bar{M}) = \rank(\bar{P}) + \rank(\bar{C} - \begin{bmatrix} K & D \end{bmatrix} \bar{P}^{\dagger} \begin{bmatrix} A  \\ F \end{bmatrix}) = \rank(\bar{P}) = r$. The corresponding $\bar{M}$ is different from $M$ and the proof is finished.
\end{proof}

\begin{proposition} \label{prop:uniq5}
Consider the partitioned matrix
\[
M = \begin{bmatrix}
F & H  & E \\ 
A & G  & B \\ 
C & K  & D  \\
J & I  & L
\end{bmatrix}\in \R^{m\times n},
\]
with $\rank(M) = r$, and $F,A,G,K,D, L$ are fixed.
Then the matrix $M$ is unique in LRMC if and only if $\rank(A) = \rank(G) = \rank(K) = \rank(D) = r$.
\end{proposition}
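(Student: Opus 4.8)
The plan is to treat $M$ as the staircase of Theorem~\ref{thm:uniq4} extended by one extra block row, and to exploit a symmetry of the sampling pattern that halves the work. Reversing the order of the block rows and of the block columns (the anti‑transpose flip $(i,j)\mapsto(5-i,4-j)$ on a $4\times 3$ block grid) maps the fixed pattern to itself while relabelling $F\leftrightarrow L$, $A\leftrightarrow D$ and $G\leftrightarrow K$. Since this operation preserves rank and the whole set of rank‑$\le r$ completions, it suffices to argue sufficiency once and, for necessity, to treat a deficient end corner ($\rank(A)<r$) and a deficient middle corner ($\rank(G)<r$); the cases $\rank(D)<r$ and $\rank(K)<r$ then follow by flipping, which turns the last three block rows into the first three of the reflected matrix.

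For sufficiency, assume $\rank(A)=\rank(G)=\rank(K)=\rank(D)=r$. Applying Theorem~\ref{thm:uniq4} to the submatrix formed by the first three block rows (whose corner conditions are exactly $\rank(A)=\rank(G)=\rank(K)=r$, and which has rank $r$ because it contains $A$) shows that $H,E,B,C$ are uniquely determined. It remains only to recover the two free blocks $I,J$ of the last row. I would recover $I$ from the lower‑right block $\bmat{K & D \\ I & L}$: its diagonally opposite anchor $D$ has rank $r$, so Theorem~\ref{thm:uniq}, after a column permutation placing $D$ in the top‑left and $I$ in the bottom‑right, forces $I$. With $C$ and $I$ now known, the same argument applied to $\bmat{C & K \\ J & I}$ with anchor $K$ of rank $r$ forces $J$. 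Hence $M$ is unique.

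For necessity I argue the contrapositive. If $\rank(A)<r$ or $\rank(G)<r$, the first three block rows violate the hypotheses of Theorem~\ref{thm:uniq4}, so that window admits a second rank‑$\le r$ completion $\bar M'$ agreeing with $M$ on the fixed blocks $F,A,G,K,D$; the deficient‑$D$ and deficient‑$K$ situations give, after the flip, the analogous second completion of the last three block rows. In either case I must then lift the local perturbation to a rank‑$r$ completion of all of $M$ without disturbing the opposite fixed corner ($L$, respectively $F$). I expect this lifting to be the main obstacle: merely substituting $\bar M'$ into $M$ need not keep the global rank at $r$, because the row $[\,J\ I\ L\,]$ with $L$ fixed must still lie in the row space of the perturbed window, i.e.\ the inclusion $\Range(L^{\mathsf T})\subseteq\Range\!\left(\bmat{\bar E\\ \bar B\\ D}^{\mathsf T}\right)$ has to survive the perturbation. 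I would resolve this exactly as in the deepest case of Theorem~\ref{thm:uniq4}: confine the perturbation to the free blocks, parametrise it through the generalized Schur complement of the deficient anchor, and use Lemma~\ref{lem:schur} to certify that the total rank remains $\rank(\text{anchor})+\rank(\text{Schur complement})=r$. Reproducing the nested case split of that proof one level deeper—first asking whether the relevant two‑block column already drops rank, and only otherwise building the explicit rank‑$(r-\bar r)$ perturbation of the Schur complement—should close the compatibility gap and complete the argument.
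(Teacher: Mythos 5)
Your sufficiency argument and the block-reversal symmetry are correct and, in fact, considerably more explicit than the paper, whose entire proof of this proposition reads ``Direct consequences from Theorem~\ref{thm:uniq2} and Theorem~\ref{thm:uniq4}'': since $\rank(A)=r$ the three-row window has rank exactly $r$, Theorem~\ref{thm:uniq4} forces $H,E,B,C$, and the two applications of Theorem~\ref{thm:uniq} with anchors $D$ and then $K$ force $I$ and then $J$.

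The necessity half, however, is a plan rather than a proof, and the gap you flag yourself is genuine. First, Theorem~\ref{thm:uniq4} is stated for a window of rank exactly $r$; when $\rank(A)<r$ the submatrix $\bmat{F&H&E\\A&G&B\\C&K&D}$ may have rank strictly below $r$ (take all four corners rank deficient and let the last block row supply the missing dimensions), in which case the necessity direction of Theorem~\ref{thm:uniq4} simply does not apply to that window and your first step already fails. Second, even when it does apply and yields a second completion $\bar M'$, the condition you isolate, $\Range(L^{\mathsf T})\subseteq\Range\bigl(\bmat{\bar E\\ \bar B\\ D}^{\mathsf T}\bigr)$, is precisely what has to be proved, and ``reproducing the nested case split one level deeper should close the compatibility gap'' does not prove it: the perturbations constructed in the proof of Theorem~\ref{thm:uniq4} move $E$, $B$ and $C$, hence move the row space of the block column containing the fixed block $L$, and nothing in your argument shows that the rows of $L$ remain inside it. A construction that confines the perturbation to blocks outside that block column (for instance, perturbing only $C$ and $J$ when $\rank(A)<r$) would sidestep the issue, but that too requires its own verification. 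To be fair, the paper supplies no detail here either; your write-up at least locates where the real work lies, but it does not do that work.
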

\begin{proof}
Direct consequences from Theorem \ref{thm:uniq2} and Theorem \ref{thm:uniq4}.
\end{proof}

With all the prerequisite theorems and propositions that have been proved, we are now able to extend to the most general case where we have a stair case of known block matrices. We conclude that the whole matrix is unique if and only if every ``corner" matrix has rank $r$. This can be proved by repeatedly using the previous theorems and propositions. 
\begin{theorem}\label{thm:mtx}
         Given a low rank matrix $Z \in \R^{m\times n}$ and a partial sampling $\PP_{\hat{E}}(Z) = z$. If by a permutation there exists a chain of bicliques (submatrices of $Z$) $\alpha_1, ..., \alpha_l$  with the corresponding edge sets $E_1,\cdots, E_l$ and vertex sets in pairs $(U_1,V_1),(U_2,V_2),\cdots,(U_l,V_l)$  . Assume $\cup_{i=1}^l E_i  = \hat{E}$ and $E_i \cap E_{j} = \emptyset, \, \forall j \geq i+2$ and the union of all the vertices of the bicliques satisfy $\cup_{i=1}^l \alpha_i = \{1,...,m\} \times \{1,...,n \}$. In addition
         \begin{itemize}
             \item   $U_i \cap U_{i+2} = U_{i+1}$, $V_{i}\cap V_{i+1} = V_i, V_{i+2} \cap V_{i+1} = V_{i+2}$ for $i $ being even  number between $1$ and $l-2$ 
             \item  $V_i \cap V_{i+2} = V_{i+1}$, $U_{i}\cap U_{i+1} = U_i, U_{i+2} \cap U_{i+1} = U_{i+2}$ for $i $ being odd  number between $1$ and $l-2$ 
         \end{itemize}
         Then the matrix $Z$ can be uniquely recovered if and only if $\rank (X_{\alpha_i \cap \alpha_{i+1}}) = r, i = 1,...,l-1$.     
\end{theorem}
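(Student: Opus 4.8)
The plan is to prove both directions by induction on the length $l$ of the biclique chain, using the short-chain results already established---Theorem~\ref{thm:uniq} (two blocks), Theorem~\ref{thm:uniq2} and Proposition~\ref{prop:uniq3} (three blocks), Theorem~\ref{thm:uniq4} and Proposition~\ref{prop:uniq5} (four and five blocks)---both as base cases and as the local engine driving the inductive step. The alternating intersection hypotheses on the pairs $(U_i,V_i)$ are exactly what guarantee that each consecutive overlap $\alpha_i\cap\alpha_{i+1}$ is a genuine rectangular corner block $X_{\alpha_i\cap\alpha_{i+1}}$, sampled in full, and that at each corner the local $2\times2$ or $3\times2$ block pattern around it matches, up to a row/column permutation and possibly a transpose, the hypotheses of one of these results.

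For sufficiency, assume $\rank(X_{\alpha_i\cap\alpha_{i+1}})=r$ for all $i$. I would process the chain from one end. Focusing on the first corner, the union $\alpha_1\cup\alpha_2$, after a permutation, forms a partitioned matrix whose pivot block is the corner $X_{\alpha_1\cap\alpha_2}$; since this corner has rank $r=\rank(Z)$, Theorem~\ref{thm:uniq} forces the single unknown block shared between the two bicliques to equal the generalized Schur complement expression, hence to be uniquely determined. This merges $\alpha_1$ and $\alpha_2$ into one fully recovered region $\tilde\alpha$, reducing the chain to $\tilde\alpha,\alpha_3,\dots,\alpha_l$ of length $l-1$. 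The observation that keeps the induction going is rank monotonicity inside a rank-$r$ matrix: because $\alpha_2\cap\alpha_3\subseteq\tilde\alpha\cap\alpha_3$ and every submatrix of $Z$ has rank at most $r$, the new corner still satisfies $\rank(X_{\tilde\alpha\cap\alpha_3})=r$. The inductive hypothesis then recovers the remainder, and the coverage condition $\cup_i\alpha_i=\{1,\dots,m\}\times\{1,\dots,n\}$ guarantees that when the chain collapses to length one every entry of $Z$ has been determined.

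For necessity, suppose some corner fails, say $\rank(X_{\alpha_k\cap\alpha_{k+1}})=\bar r<r$. I would localize to the three consecutive bicliques $\alpha_{k-1},\alpha_k,\alpha_{k+1}$ (or to $\alpha_k,\alpha_{k+1}$ at an endpoint) and, after the permutation dictated by the parity of $k$, identify this sub-configuration with the block matrix of Theorem~\ref{thm:uniq2} in which the deficient corner plays the role of the rank-deficient pivot $A$. The necessity half of that theorem produces an explicit rank-preserving perturbation---the construction $\bar E_2=E_2+M_1$, $\bar D=D-\tfrac12 M_4$ built from the Schur complements $M_1,\dots,M_4$---that alters an unknown block while keeping the global rank equal to $r$. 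The edge-disjointness hypothesis $E_i\cap E_j=\emptyset$ for $j\ge i+2$ is what certifies that the blocks being perturbed are not pinned down by any non-adjacent biclique, so the local perturbation extends to a second global completion with the same sampled data and the same rank; hence $Z$ is not unique.

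The main obstacle I anticipate is not either direction in isolation but the bookkeeping that couples them to the staircase geometry: one must verify that at every corner the alternating conditions on $U_i,V_i$ really do place the local configuration into the exact block form required by Theorem~\ref{thm:uniq} or Theorem~\ref{thm:uniq2}, including choosing the correct transpose when the staircase turns from a horizontal to a vertical overlap, and one must check in the necessity argument that the perturbation supported near the $k$-th corner does not collide with the sampled constraints of $\alpha_{k-2}$ or $\alpha_{k+2}$. Ensuring the range inclusions $\Range(B)\subseteq\Range(A)$ and $\Range(C^T)\subseteq\Range(A^T)$ hold at each stage---so that Lemma~\ref{lem:schur} applies verbatim---is the technical heart, and I expect it to follow from the rank-$r$ corner hypotheses together with the fact that all blocks are submatrices of a single rank-$r$ matrix.
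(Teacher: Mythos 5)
Your proposal follows essentially the same route as the paper: sufficiency by repeatedly merging adjacent bicliques via Theorem~\ref{thm:uniq}, and necessity by localizing a rank-deficient corner and invoking the perturbation constructions of Theorems~\ref{thm:uniq2} and~\ref{thm:uniq4}. In fact your write-up is more careful than the paper's two-sentence proof --- in particular the rank-monotonicity observation that $\rank(X_{\tilde\alpha\cap\alpha_3})=r$ after a merge, which is needed to sustain the induction and which the paper leaves implicit --- while the global-extension issue you flag in the necessity direction is likewise left unaddressed in the paper itself.
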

\begin{proof}

Suppose the bicliques satisfy $\rank (X_{\alpha_i \cap \alpha_{i+1}}) = r, i = 1,...,l-1$, then it is abvious to see the uniqueness of the completion by repeatedly using Theorem \ref{thm:uniq}.

For the other direction, suppose one of the bicluqes $X_{\alpha_i}$ satisfies $\rank(X_{\alpha_i}) \neq r$, then one can recover a different matrix $\bar{Z}$ with the same rank and sampled data by reducing the problem to the sample cases as shown in Theorem \ref{thm:uniq}, \ref{thm:uniq2}, \ref{thm:uniq4} and Proposition~\ref{prop:uniq3}, \ref{prop:uniq5}.
\end{proof}

An illustration of the ``staircase" structure in the assumption of Theorem~\ref{thm:mtx} is shown in equation \eqref{eq:staircase}.

\begin{equation}\label{eq:staircase}
Z=\left(\begin{array}{cccccccc}
 \multicolumn{1}{|c|}{} &  &  &  & & & &\\ \cline{2-3}
\multicolumn{1}{|c}{Z_1}  &  & \multicolumn{1}{c|}{Z_2} &  & & & &\\ \cline{1-2}
 &   & \multicolumn{1}{|c|}{} &  & & & &\\ \cline{4-4}
& & \multicolumn{1}{|c}{Z_3} & & \ddots & & &\\
\cline{3-4} 
& & & & & \ddots & &\\
& & & & & \multicolumn{1}{|c|}{} & &\\\cline{7-8}
& & & & & \multicolumn{1}{|c}{Z_{l-1}}  & &\\
\cline{6-8}
\end{array}\right), 
\end{equation}
\section{Graph Representation of the Problem}
\label{sect:graphs}
Our sampling yields elements $b=\PP_{\hat E} (Z)$. With the matrix $Z$
and the sampled elements we can associate a bipartite graph
\textdef{$G_Z=(U_m,V_n,\hat E)$}, where
\[
U_m=\{1,\ldots,m\},\quad V_n=\{1,\ldots,n\}.
\]
For our needs we associate $Z$ with the
\textdef{undirected graph, $G=(V,E)$},
\index{$G=(V,E)$, undirected graph}
with node set
$V=\{1,\ldots,m,m+1,\ldots,m+n\}$ and edge set $E$ that satisfies
\[
	\big\{ \{ij\in V\times V: i< j\leq m\}
	\cup \{ij\in V\times V: m+1\leq i < j\leq m+n\}\big\}
\subseteq E\subseteq \{ij\in V\times V: i< j\}.
\]
Note that as above,
$\bar{E}$ is the set of edges excluding the trivial ones, that is,
\[
	\bar{E} = E \backslash \bigg\{ \{ij\in V\times V: i\leq j\leq m\}
\cup \{ij\in V\times V: m+1\leq i\leq j\leq m+n\}\bigg\}.
\]

Recall that a \textdef{biclique} $\alpha$ in the graph $G_Z$ is a complete
bipartite subgraph in $G_Z$ with corresponding complete submatrix $z[\alpha]$.\index{complete submatrix, $z[\alpha]$}
\index{$z[\alpha]$, complete submatrix}This corresponds to a nontrivial\footnote{For $G$ we have the additional
trivial cliques of size $k$, $C=\{i_1,\ldots, i_k\}\subset \{1,\ldots, m\}$ and
$C=\{j_1,\ldots, j_k\}\subset \{m+1,\ldots, m+n\}$, that are not of
interest to our algorithm.}
\textdef{clique} in the graph $G$, a complete subgraph in $G$.
The cliques of interest are $C=\{i_1,\ldots, i_k\}$ with cardinalities
\begin{equation}
	\label{eq:cardspq}
	|C\cap \{1,\ldots, m\}|=p \neq 0, \quad
	|C\cap \{m+1,\ldots, m+n\}|=q \neq 0.
\end{equation}
The submatrix $z[\alpha]$ of $Z$ for the corresponding biclique from the
clique $C$ is
\begin{equation}
	\label{eq:Xspecif}
z[\alpha]\equiv X\equiv \{Z_{i(j-m)}: ij \in C\}, \quad \text{sampled
	$p\times q$ rectangular submatrix}.
\end{equation}
These non-trivial cliques in $G$ that correspond to bicliques
of $G_Z$ are at the center of our interest.

\subsection{Chordal graphs and clique trees}
Chordal graph is a special graph with a chordless structure. 
An undirected graph is chordal if every
cycle of length greater than three has a chord.

A clique tree of a graph $G = (V,E)$ is a tree which has the cliques
of $G$ as its vertices. A clique tree $T$ has the induced subtree property
if for every $v \in V$, the cliques that contain $v$ form a subtree (connected
subgraph) of $T$. It has been shown
that chordal graphs are exactly the graphs for which a clique tree with
the induced subtree property exists. (\cite[Theorem 2.7]{buneman:1974} and~\cite[Theorem 3]{gavril1974}).

 We show the graph $\hat{E}$ in Theorem \ref{thm:mtx} has a unique clique tree with the induced subtree property. Therefore it is chordal. 

\begin{theorem}\label{thm:graph}
Given  a graph $G$ with edge set $\hat{E}$, if by a permutation there exists a chain of bicliques $\alpha_1, ..., \alpha_l$ with the corresponding edge sets $E_1,\cdots, E_l$. Assume $E_1,\cdots, E_l$ and $\alpha_1, ..., \alpha_l$ satisfy the assumptions in Theorem~\ref{thm:mtx}.
Then the graph $\hat{E}$ is chordal and it has a unique clique tree with the induced subtree property.
\end{theorem}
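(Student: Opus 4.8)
The plan is to exhibit an explicit clique tree for the graph with edge set $\hat E$ and to verify that it has the induced subtree property; by the characterization of chordal graphs recalled above (a graph is chordal exactly when it admits a clique tree with the induced subtree property), this simultaneously proves chordality. First I would record that the cliques of the undirected graph $G$ are precisely the bicliques of $G_Z$: since each of the two vertex classes $\{1,\dots,m\}$ and $\{m+1,\dots,m+n\}$ is already complete in $G$, a set of nodes is a clique iff it has the form $U\sqcup V$ with $U\subseteq\{1,\dots,m\}$ and $V\subseteq\{m+1,\dots,m+n\}$ such that every entry of the corresponding $U\times V$ block is sampled, i.e.\ a biclique. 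Hence the maximal cliques of $G$ are the maximal bicliques, and I take the candidate clique tree $T$ to be the path $\alpha_1-\alpha_2-\cdots-\alpha_l$, writing each $\alpha_i=U_i\sqcup V_i$.

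Next I would translate the hypotheses of Theorem~\ref{thm:mtx} into the two ingredients needed. The staircase conditions give the containments $U_i\subseteq U_{i+1}\supseteq U_{i+2}$ for odd $i$ and $V_i\subseteq V_{i+1}\supseteq V_{i+2}$ for even $i$, together with $U_i\cap U_{i+2}=U_{i+1}$ for even $i$ and $V_i\cap V_{i+2}=V_{i+1}$ for odd $i$. From the edge-disjointness $E_i\cap E_{i+2}=\emptyset$, together with $E_i\cap E_{i+2}=(U_i\cap U_{i+2})\times(V_i\cap V_{i+2})$, I get that consecutive odd-indexed $U$'s are disjoint ($U_{2k-1}\cap U_{2k+1}=\emptyset$) and consecutive even-indexed $V$'s are disjoint, since the complementary intersection ($V_{i+1}$, resp.\ $U_{i+1}$) is nonempty for genuine bicliques.

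The heart of the argument is the induced subtree property: for every node $v$ of $G$ the set of cliques containing it forms a subpath of $T$, i.e.\ the index set $I_v$ is an integer interval. For a row node $u$ I would argue that $I_u=\{i:u\in U_i\}$ can contain at most one odd index — two odd indices in $I_u$ force, via the even-index intersection identity, an intermediate odd index lying in two disjoint consecutive odd sets, a contradiction — and that if $j\in I_u$ is odd then the containments force $j-1,j+1\in I_u$ while the disjointness and the identities exclude everything else, so $I_u=\{j-1,j,j+1\}$ (truncated at the boundary); if $I_u$ has no odd index it has at most one element. Either way $I_u$ is an interval, and the symmetric parity-swapped argument handles column nodes. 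This establishes the induced subtree property, hence chordality.

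Finally, for uniqueness I would compute the separators $S_i=\alpha_i\cap\alpha_{i+1}$ from the containments (e.g.\ $S_{2k-1}=U_{2k-1}\sqcup V_{2k}$ and $S_{2k}=U_{2k+1}\sqcup V_{2k}$) and show that each $S_i$ is contained in $\alpha_j$ only for $j\in\{i,i+1\}$: by the interval structure just proved, the row part of $S_i$ lives only in a window of three consecutive $U$-indices and the column part only in a window of three consecutive $V$-indices, and these windows meet exactly in $\{i,i+1\}$. Since every clique tree satisfies the running intersection property, $S_i$ must lie in every clique on the tree-path between $\alpha_i$ and $\alpha_{i+1}$; as no third clique contains $S_i$, the two are adjacent. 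These $l-1$ forced adjacencies already form a spanning path on the $l$ maximal cliques, so any clique tree equals $T$, proving uniqueness. The main obstacle is the careful parity and boundary bookkeeping in the interval argument and, especially, in verifying that each separator pins down exactly the pair $\{i,i+1\}$, which is what drives the uniqueness conclusion.
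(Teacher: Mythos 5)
Your proposal follows the same route as the paper's proof: take the path $\alpha_1-\alpha_2-\cdots-\alpha_l$ as the candidate clique tree, verify the induced subtree property, invoke the Buneman--Gavril characterization for chordality, and then argue uniqueness. You are in fact more careful than the paper on the uniqueness step. The paper argues that if $\alpha_i$ and $\alpha_{i+1}$ are non-adjacent in some other clique tree then the induced subtree property fails because there is a vertex $v\in\alpha_i\cap\alpha_{i+1}$; this is not enough, since the tree path between them could a priori consist of cliques that all contain $v$. Your separator argument --- every clique on the path between $\alpha_i$ and $\alpha_{i+1}$ must contain all of $S_i=\alpha_i\cap\alpha_{i+1}$, and no third maximal clique does --- is the correct way to force the $l-1$ adjacencies and is a genuine improvement over what is written in the paper.

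There is, however, a gap in your interval argument for the induced subtree property, and it is precisely the locality claim that the paper asserts without proof (``$v$ either belongs to $\alpha_i$ or to $\alpha_i\cap\alpha_{i+1}$ for some $i$''). The hypotheses of Theorem~\ref{thm:mtx} only constrain intersections of bicliques at index distance $1$ and $2$; for equal-parity indices at distance $\geq 4$ they say nothing. Concretely, consider a row node $u\in U_2\cap U_6$ with $u\notin U_3\cup U_4\cup U_5$. This is consistent with $U_2\cap U_4=U_3$, $U_4\cap U_6=U_5$ and $U_3\cap U_5=\emptyset$, and the edge-disjointness $E_2\cap E_6=\emptyset$ is vacuous here because $V_2\cap V_6=\emptyset$ already follows from the distance-$2$ relations (via $V_2\cap V_6\subseteq V_3\cap V_5=V_4$ and $V_2\cap V_4=\emptyset$). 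So $I_u=\{2,6\}$ is not excluded, and your claims that $I_u$ contains at most one odd index and that an odd-free $I_u$ is a singleton are only established for indices differing by $2$ or $3$: for $U_1\cap U_7$, say, the chain of implications you invoke needs $u\in U_4$, which is not available. To close this you either need an extra hypothesis (e.g., that non-consecutive $U_i$'s, resp.\ $V_i$'s, are disjoint, as holds in the intended staircase picture where they are intervals of row/column indices) or a different argument; the same repair is needed for your claims that the $\alpha_i$ exhaust the maximal cliques of $G$ and that no third clique contains a separator $S_i$.
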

\begin{proof}
Consider the set of bicliques $\alpha_1, \cdots, \alpha_l$, each clique $\alpha_i$ is a node and connect $\alpha_i$ and $\alpha_{i+1}$ by an edge, then it forms a clique tree. For any vertex $v$ in $\hat{E}$, it either belongs to $\alpha_i$ or belongs to $\alpha_i \cap \alpha_i+1$ for some $i$, therefore the set of cliques containing $v$ is connected. Hence it has the induced subtree property. Therefore by Theorem 3 in \cite{gavril1974} it is a chordal graph. Now consider a different clique tree, then there exists $\alpha_i$ which is not connected $\alpha_{i+1}$, but there exists $v \in \alpha_i \cap \alpha_i+1$, therefore the induced subtree property is not satisfied. 
\end{proof}

\begin{proposition}
Given a graph $G$ with $\hat{E}$ which satisfies the chain clique properties defined in Theorem~\ref{thm:graph}, then there exists a polynomial algorithm to determine if the correponding low rank matrix completion problem \ref{prob:nonconvex} has a unique solution. 
\end{proposition}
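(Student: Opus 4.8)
The plan is to reduce the decision problem directly to the rank characterization of Theorem~\ref{thm:mtx}. That theorem asserts that, under the chain clique hypotheses, the completion of problem~\eqref{prob:nonconvex} is globally unique if and only if $\rank(X_{\alpha_i \cap \alpha_{i+1}}) = r$ for every $i = 1, \ldots, l-1$, where $r$ is the target rank. Since each intersection $\alpha_i \cap \alpha_{i+1}$ is a biclique common to two sampled bicliques, all of its entries lie in $\hat{E}$, so the corner submatrix $X_{\alpha_i \cap \alpha_{i+1}}$ is fully determined by the data $z = \PP_{\hat{E}}(Z)$. Thus uniqueness is decided by a finite family of rank tests on explicitly available submatrices, and the whole task reduces to bounding the cost of producing these submatrices and computing their ranks.

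First I would exhibit the chain itself. By Theorem~\ref{thm:graph}, the hypotheses force $\hat{E}$ to be chordal with a unique clique tree having the induced subtree property; the maximal cliques together with this tree can be produced in polynomial time by standard chordal-graph routines, for instance maximum cardinality search followed by clique-tree construction. Reading off the path $\alpha_1, \ldots, \alpha_l$ and its consecutive intersections is then immediate. Because each biclique in the chain contributes at least one fresh row or column index and the vertex sets cover $\{1,\ldots,m\} \times \{1,\ldots,n\}$, the chain length $l$ is $O(m+n)$, hence polynomial in the input size.

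Next I would carry out the rank computations. For each $i$ the corner submatrix $X_{\alpha_i \cap \alpha_{i+1}}$ has dimensions bounded by $m \times n$, and its rank can be computed exactly by Gaussian elimination (over the rationals, by any polynomial-time rank procedure) in time polynomial in $mn$. Comparing each computed rank against $r$ and taking the conjunction over the at most $l-1 = O(m+n)$ corners yields the decision; correctness is precisely the ``if and only if'' of Theorem~\ref{thm:mtx}. Multiplying the per-corner cost by the number of corners keeps the total polynomial.

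The main obstacle is not the arithmetic but the bookkeeping that certifies the chain is recoverable in polynomial time: one must verify that the unique clique tree guaranteed by Theorem~\ref{thm:graph} can be oriented into the linear chain $\alpha_1, \ldots, \alpha_l$ respecting the parity conditions on the $U_i, V_i$, and that this ordering is forced up to reversal so that the corner submatrices are unambiguously defined. Once that is established, the polynomial-time bound follows routinely from the polynomiality of chordal recognition and of exact rank computation.
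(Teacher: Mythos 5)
Your proposal follows essentially the same route as the paper: invoke the characterization of Theorem~\ref{thm:mtx}, recover the clique tree via maximum cardinality search in polynomial time, and then test the rank of each consecutive intersection against $r$. Your version is somewhat more careful than the paper's (which is only a few lines) about bounding the chain length and about the need to certify that the unique clique tree can actually be oriented into the required chain, but the underlying argument is the same.
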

\begin{proof}

Given a chordal graph, finding the clique tree with the induced subtree property can be solved in polynomial time by the maximum cardinality search algorithm (MCS) ~\cite{tarjan1983,blair1993}. 

After we the clique tree is found, checking if the intersection of two neighbouring clique has rank $r$ can also be done in polynomial time.
\end{proof}




\section{Completion of Positive Semidefinite Matrices is not true}

We recall the following theorem about symmetric matrix:
\begin{theorem}\label{thm:psdschur}
Suppose M is symmetric and partitioned as 
\[
M = \begin{bmatrix}
A & B \\
B^* & C
\end{bmatrix},
\]
in which $A$ and $C$ are square. Then $M \succeq 0$ if and only if $A \succeq 0, \Range(B) \subseteq \Range(A),$ and $M / A \succeq 0$.
\end{theorem}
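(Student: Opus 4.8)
The plan is to reduce the entire equivalence to the block factorization of Lemma~\ref{lem:schur} together with the invariance of positive semidefiniteness under congruence. The key observation is that when $M$ is Hermitian the two triangular factors of Lemma~\ref{lem:schur} become adjoints of each other: since $A=A^*$ forces $A^\dagger$ to be Hermitian, setting $T=\begin{bmatrix} I & -A^\dagger B \\ 0 & I\end{bmatrix}$ gives $T^*=\begin{bmatrix} I & 0 \\ -B^*A^\dagger & I\end{bmatrix}$, and the single hypothesis $\Range(B)\subseteq\Range(A)$ supplies both range conditions required by Lemma~\ref{lem:schur}, since $A=A^*$ makes the second condition coincide with the first. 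Whenever that inclusion holds, Lemma~\ref{lem:schur} therefore yields the genuine congruence
\[
T^* M T=\begin{bmatrix} A & 0 \\ 0 & M/A\end{bmatrix}.
\]
Because $T$ is invertible, $M\succeq 0$ if and only if $T^*MT\succeq 0$, which in turn holds if and only if both diagonal blocks $A$ and $M/A$ are positive semidefinite.

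This settles the equivalence whenever the range condition is already in force, so for sufficiency I would simply assume $A\succeq 0$, $\Range(B)\subseteq\Range(A)$, and $M/A\succeq 0$; the displayed block-diagonal matrix is then PSD, and congruence by the invertible $T^{-1}$ transports this back to $M=(T^{-1})^*\,\diag(A,M/A)\,T^{-1}\succeq 0$.

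For necessity I would assume $M\succeq 0$ and recover the three conclusions in order. First, $A\succeq 0$ is immediate by restricting the quadratic form $x^*Mx\ge 0$ to vectors $x=\begin{bmatrix} u \\ 0\end{bmatrix}$, which gives $u^*Au\ge 0$. The delicate step --- and the one I expect to be the main obstacle --- is the range inclusion $\Range(B)\subseteq\Range(A)$, which does \emph{not} follow from a single principal-submatrix restriction. Here I would argue by contradiction: since $A$ is Hermitian and PSD, $\Range(A)^\perp=\Null(A)$, so a failure of the inclusion produces $u$ with $Au=0$ but $s:=B^*u\neq 0$. Testing $M$ on $x=\begin{bmatrix} t u \\ s\end{bmatrix}$ and using $Au=0$ yields $x^*Mx=2t\,\|s\|^2+s^*Cs$, which tends to $-\infty$ as $t\to-\infty$, contradicting $M\succeq 0$; hence $B^*u=0$ for every $u\in\Null(A)$, i.e. $\Range(B)\subseteq\Range(A)$. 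With the inclusion secured, the congruence above is valid, and $M/A\succeq 0$ follows because $M/A$ is the lower diagonal block of $T^*MT\succeq 0$.
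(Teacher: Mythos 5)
Your proposal is correct. Note that the paper itself offers no proof of Theorem~\ref{thm:psdschur}: it is simply ``recalled'' as a known result (it is the classical Albert/Schur-complement characterization from the Horn--Zhang reference), so there is no in-paper argument to compare against. Your derivation is a clean, self-contained proof built on the paper's own Lemma~\ref{lem:schur}: you correctly observe that symmetry of $M$ collapses the two range hypotheses of that lemma into the single condition $\Range(B)\subseteq\Range(A)$ and turns the two triangular factors into adjoints, so the block factorization becomes a genuine congruence $T^*MT=\diag(A,\,M/A)$, from which sufficiency and the implication $M\succeq 0\Rightarrow M/A\succeq 0$ are immediate. You also correctly isolate the only delicate point of the necessity direction --- that $M\succeq 0$ forces $\Range(B)\subseteq\Range(A)$, which does not follow from looking at principal submatrices alone --- and your unboundedness argument with the test vector $x=\bigl[\,tu;\,s\,\bigr]$, $u\in\Null(A)$, $s=B^*u\neq 0$, giving $x^*Mx=2t\|s\|^2+s^*Cs\to-\infty$, is the standard and correct way to close it. No gaps.
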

For positive semidefinite matrix completion, we have the following proposition:
\begin{proposition} \label{thm:psduniq}
Consider the partitioned matrix
\[
M = \begin{bmatrix}
 A & B \\ 
 B^T & C
\end{bmatrix}\in \R^{m\times n},
\]
where $\rank(M) = r$ and $M \succeq 0$ in which $A$, $C$ are square, and the blocks $A,B$ are fixed.
Then there exists a unique positive semidefinite matrix $M$  if and only if $\rank(A) = r$.
\end{proposition}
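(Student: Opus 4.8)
The plan is to mirror the proof of Theorem~\ref{thm:uniq}, replacing the generic rank bookkeeping with the positive semidefinite Schur complement of Theorem~\ref{thm:psdschur}, so that every completion I construct is automatically PSD. The first observation is that, because $M$ is a fixed PSD matrix of rank $r$, Theorem~\ref{thm:psdschur} guarantees $A \succeq 0$ and $\Range(B) \subseteq \Range(A)$. Since $A$ is symmetric, the column-range hypothesis of Lemma~\ref{lem:schur} applied to the lower-left block $B^T$, namely $\Range((B^T)^T)=\Range(B)\subseteq\Range(A)=\Range(A^T)$, holds as well. Consequently the Schur complement $S := M/A = C - B^T A^\dagger B$ is well-defined, satisfies $S \succeq 0$, and obeys the rank identity $\rank(M) = \rank(A) + \rank(S)$. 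This single identity, together with the PSD characterization, drives both directions.

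For sufficiency, suppose $\rank(A) = r$. Then $\rank(S) = r - r = 0$, forcing $S = 0$ and hence $C = B^T A^\dagger B$, which is uniquely determined by the fixed blocks $A$ and $B$. To confirm this is the only admissible completion, I would note that any PSD completion $\bar M$ agreeing with $M$ on $A$ and $B$ contains $A$ as a principal submatrix, so $\rank(\bar M) \geq \rank(A) = r$; combined with the constraint $\rank(\bar M) \leq r$ this forces $\rank(\bar M) = r$, and the same Schur argument applied to $\bar M$ gives $\bar M/A = 0$, i.e. $\bar C = B^T A^\dagger B = C$. Thus the completion is unique.

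For necessity, assume $\rank(A) = \bar r < r$. Here the PSD hypothesis is a genuine simplification over Theorem~\ref{thm:uniq}: the troublesome branch in which $\Range(B) \not\subseteq \Range(A)$ cannot occur, precisely because the fixed $M$ is PSD. Hence $S = M/A \succeq 0$ with $\rank(S) = r - \bar r > 0$, so $S \neq 0$. I then perturb only the free block $C$: set $\bar S := 2S \succeq 0$ and $\bar C := B^T A^\dagger B + \bar S = 2C - B^T A^\dagger B$, and let $\bar M := \begin{bmatrix} A & B \\ B^T & \bar C \end{bmatrix}$. Since $\bar M/A = \bar S \succeq 0$, $A \succeq 0$, and $\Range(B) \subseteq \Range(A)$, Theorem~\ref{thm:psdschur} yields $\bar M \succeq 0$, while Lemma~\ref{lem:schur} gives $\rank(\bar M) = \bar r + \rank(2S) = \bar r + (r-\bar r) = r$. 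Because $S \neq 0$ we have $\bar C \neq C$, so $\bar M$ is a second PSD completion of rank $r$, and $M$ is not unique.

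The only place demanding care is maintaining positive semidefiniteness under the perturbation, and this is exactly what Theorem~\ref{thm:psdschur} resolves: it reduces semidefiniteness of the completed matrix to semidefiniteness of its Schur complement, and scaling $S$ by a positive constant preserves both its semidefiniteness and its rank. In fact any $\bar S \succeq 0$ with $0 < \rank(\bar S) \leq r - \bar r$ and $\bar S \neq S$ serves equally well, so there are infinitely many completions, consistent with the dichotomy (global uniqueness versus infinitely many solutions) advertised in the abstract.
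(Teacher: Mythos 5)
Your proposal is correct and follows essentially the same route as the paper: both use the PSD characterization (Theorem~\ref{thm:psdschur}) to guarantee $A \succeq 0$ and $\Range(B) \subseteq \Range(A)$, apply the Schur-complement rank identity to force $C = B^T A^\dagger B$ when $\rank(A) = r$, and, when $\rank(A) < r$, perturb the Schur complement by a PSD matrix of the right rank to produce a second PSD completion (the paper takes an arbitrary PSD $E$ of rank $r - \bar r$, you take the concrete choice $2S$). Your added justification in the sufficiency direction that any competing PSD completion must also have rank exactly $r$ is a small but welcome elaboration of a step the paper leaves implicit.
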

\begin{proof}
First assume $\rank(A) = r$. Let $M/A = C-B^T A^{\dagger} B$ be the
generalized Schur complement, 
where $A^\dagger$ denotes the
Moore-Penrose generalized inverse. The existence of $M \succeq 0$ ensures $\Range(B) \subseteq \Range(A)$ and $A \succeq 0$, therefore $M/A$ is well-defined. We then 
have $\rank(M/A) + \rank(A) = \rank(M)$. Hence $\rank(M/A) = 0$ and $C = B^T A^{\dagger}B \succeq 0$ is unique. 

For necessity, assume $\rank(A) <r$, the existence of $M \succeq 0$ ensures $\Range(B) \subseteq \Range(A)$ and $A \succeq 0$, therefore $M/A$ is well-defined and
\[
\rank(M/A) + \rank(A) = \rank(M)
\]
Since $\rank(A) < r$ and $\rank(M) = r$, we have $\rank(C - B^T A^{\dagger}B) = \rank(M) -\rank(A) = \bar r >0$. We can then let $C = B^T A^{\dagger}B + E$ where $E$ is an arbitrary positive semidefinite matrix of rank $\bar r$. 
Hence by Theorem \ref{thm:psdschur} 
\[
\bar{M} = \begin{bmatrix}
A & B \\
B^T & B^TA^{\dagger} B  + E
\end{bmatrix} \succeq 0.
\]
Therefore $M$ is not unique.
\end{proof}

Now ee consider the following question:
let $M$ be the partitioned matrix
\[
M = \begin{bmatrix}
 A & B & D\\ 
 B^T & C & E \\
D^T & E^T & F
\end{bmatrix}\in \R^{m\times n},
\]
$M$ is symmetric and positive semidefinite and the diagonal elements of $M$ are all nonzeros. Suppose $A, B, C, E, F$ are fixed and $\rank(M) = r$. Then it is easy to see that $M$ can be uniquely completed if $\rank(C) = r$. We now ask the question: is the converse also true? The answer is NO.

We consider the following example

\[
H = \left[
\begin{array}{ccc}
    5 & 4 & -2 \\
    4 & 16 & -8 \\
    -2 & -8 & 4 
\end{array}
\right]
\]
Suppose the top right element and bottom left element $-2$ are missing, the partially observed entries are the union of two 2 by 2 cliques, i.e,
\[
\left[
\begin{array}{ccc}
    5 & 4 & ? \\
    4 & 16 & -8 \\
    ? & -8 & 4 
\end{array}
\right]
\]

We partition $H$ as matrix $M$. 
The intersection of the two cliques is 16 which is rank 1, and $\rank(H) = 2$. Let the missing element $D$ be $x$.

We compute  $\rank(\begin{bmatrix}
5 & 4 \\
4 & 16
\end{bmatrix}) = 2$. In order for the rank of $H$ to be 2, the Schur complement 
\[
4 - (x, -8)\begin{bmatrix}
5 & 4 \\
4 & 16
\end{bmatrix}^{-1}\begin{bmatrix}
x \\
-8
\end{bmatrix} = -(\frac{x^2}{4}+x + 1) = - (\frac{1}{2}x + 1)^2
\]
must be zero. (otherwise the rank of $H$ is 3.)

Therefore $x = -2$ is the only solution, the missing entry can be uniquely completed.

\section{Conclusion}
In this paper, we show that if the missing entries of a matrix of rank $r$ has a "staircase" pattern, then the uniqueness of the recovery problem given the target rank depends totally on the submatrices at the corner of the "staircase". The proof of the main theorem makes extensive use of Schur complement.  

\section*{Acknowledgment}
The author thanks Henry Wolkowicz and Anders Forsgren for the very helpful discussion during this work.
\addcontentsline{toc}{section}{Acknowledgment}

\bibliographystyle{plain}
\bibliography{references}

\end{document}